\newtheorem{theorem}{Theorem}[section]
\newtheorem{lemma}[theorem]{Lemma}
\theoremstyle{definition}
\newtheorem{prob}[theorem]{Problem}
\begin{document}
\title[]{An extended symmetric union with multiple tangle regions and its Alexander polynomial}
\author{Teruaki KITANO \and Yasuharu NAKAE}
\address{Department of Information Systems Science, Faculty of Science and Engineering, Soka University, Tangi-machi 1-236, Hachioji, Tokyo, 192-8577, Japan}
\email{kitano@soka.ac.jp}
\address{Graduate School of Engineering Science, Akita University,
1-1 Tegata Gakuen-machi, Akita city, Akita, 010-8502, Japan.}
\email{nakae@math.akita-u.ac.jp}
\thanks{
\textit{Keyword and phrases:} Knot group, epimorphism, symmetric union, Alexander polynomial
}
\subjclass[2020]{Primary 57K10; Secondary 57M05}

\begin{abstract}
The authors recently introduced a new construction of a knot as an extended symmetric union of a knot with a single tangle region.
In this paper, we generalize the construction to include multiple tangle regions.
The constructed knot $K$ with a partial knot $\hat{K}$ and multiple tangle regions satisfies the following two properties: its Alexander polynomial is the product of the Alexander polynomials of the numerators of these tangles and the square of the Alexander polynomial of the partial knot $\hat{K}$, and there exists a surjective homomorphism from the knot group of $K$ to that of $\hat{K}$
which maps the longitude of $K$ to the trivial element.
\end{abstract}

\maketitle

\setlength{\baselineskip}{14pt}

\section{Introduction}
Let $K$ be a knot in $S^3$, and let $G(K)=\pi_1(S^3\setminus K)$ denote the knot group of $K$.
For prime knots $K_1$ and $K_2$, we write $K_1 \geq K_2$ if there exists an epimorphism $\varphi: G(K_1)\to G(K_2)$ between these knot groups and $\varphi$ preserves the meridian.
The relation $\geq$ becomes a partial order on the set of prime knots, see \cite{KS2008}.
The pairs of knots satisfying $K_1 \geq K_2$ are studied by Kitano and Suzuki in \cite{KS2005, KS2005C} and by Horie, Kitano, Matsumoto, and Suzuki in \cite{HKMS2011, HKMS2011E}.
A pair of knots with $K_1 \geq K_2$ has some interesting properties regarding the relationship between these knots.
For instance, when $K_1 \geq K_2$, the Alexander polynomial of $K_1$ is divisible by that of $K_2$; if $K_1$ is a fibered knot, then $K_2$ is also fibered.

The concept of a symmetric union of a knot was introduced by Kinoshita and Terasaka in \cite{KT1957}, and was generalized by Lamm in \cite{L2000}.
Recently, the authors proposed a new construction that extends the symmetric union of a knot and showed that the resulting knots have some properties similar to those of the original symmetric union.
In that paper, the twist region of a symmetric union of a knot with a single twist region is replaced by a tangle region \cite{KN2025}.
In this paper, we extend the construction to include multiple tangle regions.

Let $D$ be a planar diagram of a knot $\hat{K}$ in $S^3$.
We consider the diagram $D^\ast$ as the diagram of the knot obtained by reflecting $\hat{K}$ across the plane orthogonal to the plane on which the diagram $D$ lies.  
Let $T_i$ be tangles such that the left-side endpoints are connected within $T_i$, and the right-side endpoints are also connected within $T_i$. We call this type of tangle an ``even type'' tangle.
We take $n+1$ arcs $\tilde{x}_0, \dots , \tilde{x}_{n-1}, \tilde{x}_n$ from the diagram $D$ as shown on the left of Figure \ref{maintheorem_diagram}.
Let $\tilde{x}_0^\ast, \dots, \tilde{x}_{n-1}^\ast, \tilde{x}_n^\ast$ be arcs in $D^\ast$ corresponding to $\tilde{x}_0, \dots, \tilde{x}_{n-1}, \tilde{x}_n$.
Placing the diagrams $D$ and $D^\ast$ symmetrically, forming a connected sum of $D$ and $D^\ast$ by using $\tilde{x}_0$ and $\tilde{x}_0^\ast$, and replacing each pair of arcs $\tilde{x}_i$ and $\tilde{x}_i^\ast$ with the tangle $T_i$,
we define a knot $K$ as consisting of tangle regions $\{T_i\}$ along with the diagrams $D$ and $D^\ast$ as shown on the right of Figure \ref{maintheorem_diagram}.

For a tangle $T$ with four endpoints arranged at the corners,
the numerator $N(T)$ of $T$ is formed by connecting the top endpoints to each other
and the bottom endpoints as well.
A simple closed curve on the boundary of the tubular neighborhood of a knot is referred to as the preferred longitude if it bounds a Seifert surface.
In this paper, a longitude refers to a preferred longitude.
For a knot $K$, $\Delta_K(t)$ denotes the Alexander polynomial of $K$.

\begin{theorem}\label{maintheorem}

The knot $K$, constructed as described above, satisfies the following properties.
\begin{enumerate}
\item $\Delta_K(t)=\Delta_{N(T_1)}(t)\cdots\Delta_{N(T_n)}(t)\left(\Delta_{\hat{K}}(t)\right)^2$.
\item There is an epimorphism $\varphi:G(K)\to G(\hat{K})$ such that
$\varphi$ maps a meridian of $K$ to a meridian of $\hat{K}$,
and maps the preferred longitude of $K$ to the trivial element in $G(\hat{K})$.
\end{enumerate}
\end{theorem}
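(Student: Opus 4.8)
The plan is to prove the two assertions by separate means: the epimorphism in (2) by a Wirtinger-level folding argument, and the factorization in (1) by a block decomposition of a Seifert matrix adapted to the three kinds of regions in the diagram.

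For (2) I would first read off a Wirtinger presentation of $G(K)$ from the right-hand diagram of Figure \ref{maintheorem_diagram}, sorting the meridional generators into three families: those of the arcs of $D$, those of the mirror arcs of $D^\ast$, and those lying in the interiors of the tangle regions $T_1,\dots,T_n$. Alongside it I fix the Wirtinger presentation of $G(\hat K)$ coming from $D$ alone. I then define $\varphi$ on generators by sending each $D$-meridian to the corresponding meridian of $\hat K$, each $D^\ast$-meridian to the meridian of its mirror partner (with the orientation inherited from the reflection), and every generator inside $T_i$ to the single meridian $x_i$ of the arc $\tilde x_i$ that $T_i$ replaces. The content is the verification that $\varphi$ respects every Wirtinger relation: the crossing relations of $D$ map verbatim to relations of $G(\hat K)$; those of $D^\ast$ are the mirror relations and already hold in $G(\hat K)$; and, since all strands of a fixed $T_i$ are sent to the common element $x_i$, each crossing relation internal to a tangle collapses to the trivial identity $x_i = x_i x_i x_i^{-1}$. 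Surjectivity is then immediate, as the image already contains every meridian of $\hat K$, and the meridian-to-meridian claim is built into the definition.

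The delicate part of (2) is that $\varphi$ annihilates the preferred longitude. Here I would write $\ell_K$ explicitly as the word in meridians obtained by traversing $K$ once and inserting the framing correction that makes $\ell_K$ null-homologous. Beginning at the connected-sum arc $\tilde x_0$, the traversal splits into a passage through the $D$-part and a passage through the $D^\ast$-part; since $D^\ast$ is the mirror of $D$ and is run with the reflected orientation, every crossing sign is reversed, so after applying $\varphi$ the $D^\ast$-contribution is the formal inverse of the $D$-contribution, and the writhe terms in the framing correction occur in cancelling pairs. Thus $\varphi(\ell_K)$ reduces to a word times its inverse and is trivial. I expect this bookkeeping — pairing each mirror crossing and confirming the exact cancellation of the framing term — to be the main obstacle in (2).

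For (1) I would construct a Seifert surface $F$ for $K$ by running Seifert's algorithm compatibly with the symmetry, so that $F$ splits into a piece $F_0$ over the $D$-part, its mirror $F_0^\ast$ over the $D^\ast$-part, and a piece $F_i$ over each tangle region $T_i$. Taking a basis of $H_1(F)$ subordinate to this splitting, the Seifert matrix $V$ becomes block triangular, with diagonal blocks a Seifert matrix of $\hat K$, its mirror (again computing $\Delta_{\hat K}$), and, for each $i$, a Seifert matrix whose associated closed surface is a Seifert surface of the numerator $N(T_i)$. Then $\Delta_K(t)\doteq\det\!\left(V-tV^{\mathrm T}\right)$ is the product of the determinants of the diagonal blocks, giving $\Delta_{N(T_1)}(t)\cdots\Delta_{N(T_n)}(t)\,(\Delta_{\hat K}(t))^2$. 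The two points that carry the argument, and that I expect to need the most care, are: that the linking numbers between cycles in different pieces vanish or sit strictly below the diagonal, so that $V$ really is block triangular — this is where the even type of the tangles and the symmetric placement are used — and that the tangle block computes $N(T_i)$ rather than some other closure, i.e.\ that the way $F_i$ is glued into $F$ reproduces the numerator closure of $T_i$. An alternative that avoids part of this is induction on $n$, adding one tangle at a time and invoking the single-tangle case of \cite{KN2025}; the inductive step nonetheless reduces to the same linking-number analysis.
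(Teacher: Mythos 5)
Your argument for part (2) is essentially the paper's own proof: the same folding map on Wirtinger generators ($x_i,\,x_i^\ast \mapsto \hat{x}_i$, every arc of $T_i$ to the meridian of the replaced arc $\tilde{x}_i$), the same collapse of the tangle relators to trivial identities, and the same mirror-pairing cancellation for the longitude (in the paper the word $\gamma=\gamma_-\gamma_+$ telescopes because $\varphi(w_iw_i^\ast)=1$ from the innermost pair outward, which is your ``word times its inverse''; the only point your sketch glosses over is that the longitude also picks up sub-words read inside the tangles, which die under $\varphi$ because all generators of a fixed $T_i$ have a common image and each under-crossing contributes a zero-exponent-sum pair).

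Part (1) contains a genuine gap. Your plan needs ``a basis of $H_1(F)$ subordinate to the splitting'' $F=F_0\cup F_0^\ast\cup F_1\cup\dots\cup F_n$, but no such basis exists: the pieces are glued along arcs, and the gluing pattern has loops (for each $i$, $F_0$ is joined to $F_0^\ast$ both through $F_i$ and through the connected-sum region), so $H_1(F)$ necessarily contains \emph{global} cycles that run through $F_0$, a tangle piece, $F_0^\ast$, and back through another neck; these lie in no single piece. This is not a technicality to be absorbed into off-diagonal blocks: the block that is supposed to compute $\Delta_{N(T_i)}$ must be assembled precisely from these global cycles (capping $T_i$ to form the numerator closure corresponds to closing them up), and their Seifert pairings receive contributions from the $D$- and $D^\ast$-parts of the surface. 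Proving that those contributions cancel, so that the pairing of the global cycles is a Seifert pairing for $N(T_i)$ and triangularity holds, is not bookkeeping --- it is the entire content of the theorem, and it is exactly the step left open. A symptom: already for a classical symmetric union with a single even twist region, where $N(T_1)$ is unknotted and the claim reads $\Delta_K=\Delta_{\hat{K}}^2$, the cycle through the twist region traverses both halves of the surface; this is presumably why Kinoshita--Terasaka and Lamm prove that case via Alexander's matrix rather than Seifert matrices.

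The paper's actual route --- which is also the natural repair of your inductive fallback --- avoids Seifert surfaces entirely. It first proves a vanishing lemma (Lemma \ref{numeratorofT0}, the analogue of Theorem 2.3 of \cite{L2000}): the link obtained by replacing every $T_i$ by the $0$-tangle has zero Alexander polynomial, shown by a rank deficiency of Alexander's crossing/region matrix forced by the mirror symmetry. It then inducts on $n$ using Conway skein calculus: resolving the crossings of $T_n$ gives $\nabla_{K_n}=f\,\nabla_{K_{n-1}}+g\,\nabla_{\bar{K}_{n-1}}$, where $\bar{K}_{n-1}$ has $T_n$ replaced by the $0$-tangle; the same skein tree applied to $N(T_n)$ identifies $f=\nabla_{N(T_n)}$ (since $\nabla_{N(T(1/0))}=1$ and $\nabla_{N(T(0/1))}=0$); and repeated skein resolution together with the vanishing lemma gives $\nabla_{\bar{K}_{n-1}}=0$. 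In your fallback (``induction on $n$, invoking the single-tangle case of \cite{KN2025}'') the inductive step is exactly this skein computation, not a linking-number analysis; the vanishing lemma is the missing ingredient that kills the unwanted cross term.
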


\begin{figure}[h]
	\centerline{\includegraphics[keepaspectratio]{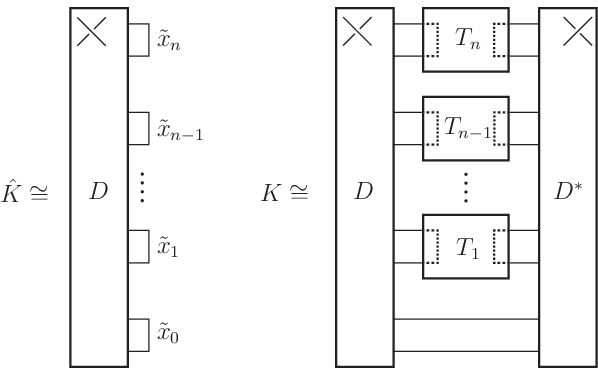}}
\caption{the diagram of a knot $K$}
\label{maintheorem_diagram}
\end{figure}

In Theorem \ref{maintheorem}, when all $T_i$ consist of an even number of twists, property (1) follows from a theorem by Kinoshita and Terasaka \cite[Theorem 2]{KT1957} and Lamm \cite[Theorem 2.4]{L2000}, property (2) follows from a theorem by Eisermann \cite[Theorem 3.3]{L2000}.
Therefore, the construction in Theorem \ref{maintheorem} extends a symmetric union of a knot where all twist regions are of even type, and these knots inherit some properties of this type of symmetric union.

For a symmetric union of a knot with only even twist regions, 
Boileau, Kitano, and Nozaki showed that a similar property related to (1) of Theorem \ref{maintheorem} holds for twisted Alexander polynomial \cite[Theorem 1.6]{BKN2024}.
However, for the case of ``even type'' tangles $T_i$, it may be challenging to prove the property (1) within the setting of the twisted Alexander polynomials.

As a corollary, similar to \cite[Corollary 1.5]{KN2025}, we can easily construct a non-fibered knot $K$ whose knot group surjectively maps to that of a fibered knot $\hat{K}$, and the longitude of $K$ is mapped to a trivial element by applying the construction from Theorem \ref{maintheorem} as follows.
Let $\hat{K}$ be a fibered knot; for example, we could take $\hat{K}$ to be the figure-eight knot. Also, let $T_k$ be a tangle for which the Alexander polynomial of numerator $N(T_k)$ is not monic.
Then we see that the Alexander polynomial of $K$, constructed using the method of Theorem \ref{maintheorem} and including the tangle $T_k$ in the set of tangles $\{T_i\}$, is not monic.
Therefore, the constructed knot $K$ is not fibered since a fibered knot has a monic Alexander polynomial.

\section{Proof of Main Theorem}
This section is divided into two subsections for the proofs of (1) and (2) of Theorem \ref{maintheorem}.

\subsection{Proof of Theorem \ref{maintheorem} (1)}
Let $T(p/q)$ be a rational tangle with a slope $p/q$.
The two special rational tangles, the $0$-slope tangle $T(0/1)$ and the infinite slope tangle $T(1/0)$, are illustrated in Figure \ref{specialtangles}.

\begin{figure}[h]
	\centerline{\includegraphics[keepaspectratio]{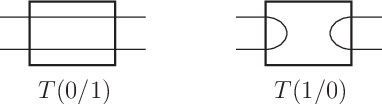}}
\caption{two special rational tangles}
\label{specialtangles}
\end{figure}

To prove Theorem \ref{maintheorem}, we need the following lemma, which is essentially the same as Theorem 2.3 of \cite{L2000}.

\begin{lemma}\label{numeratorofT0}
Let $L_0^{(n)}$ be a link in which all tangles $T_i$ of $K$ are replaced by a $0$-slope tangle $T(0/1)$.
Then $\Delta_{L_0^{(n)}}(t)=0$.
\end{lemma}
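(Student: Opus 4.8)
The plan is to follow the diagram through the substitution and then recognise $L_0^{(n)}$ as a reflection-symmetric link whose Alexander polynomial is forced to vanish. First I would record what $T(0/1)$ does to the connectivity. An even-type tangle joins its two left endpoints to each other and its two right endpoints to each other, so in $K$ it keeps each strand of $D$ on the $D$-side and each strand of $D^\ast$ on the $D^\ast$-side; the $0$-slope tangle instead runs two horizontal strands straight across, joining each endpoint coming from $D$ to the neighbouring endpoint coming from $D^\ast$. Thus after replacing every tangle (and noting that the connected-sum site at $\tilde{x}_0$ is itself already such a horizontal bridge) all of the junctions between the two halves become trivial, crossingless bridges. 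Writing $P$ for the sphere of the reflection $\sigma$ carrying $D$ to $D^\ast$, this exhibits $L_0^{(n)}$ as the union along $P$ of the tangle $U$ obtained by cutting $\hat{K}$ along $\tilde{x}_0,\dots,\tilde{x}_n$ and its mirror $\sigma(U)$; in particular $L_0^{(n)}$ is invariant under $\sigma$, every component has the form $a\cup\sigma(a)$ for an arc $a$ of $\hat{K}$, and, since $n\ge 1$, there are at least two components.

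Second, I would use the reflection symmetry to produce the vanishing. Because $\sigma$ maps the link to itself while reversing the orientation of $S^3$, comparing linking numbers upstairs and down forces all pairwise linking numbers to be zero. More importantly, since the bridges introduced by the $0$-slope tangles carry no crossings, the half $\sigma(U)$ can be pushed back across $P$ against $U$, and I would turn this into disjoint, $\sigma$-symmetric Seifert surfaces for the individual components, built in neighbourhoods of the disjoint arcs $a$ of $\hat{K}$. Producing such disjoint surfaces shows that $L_0^{(n)}$ is a split (indeed boundary) link, and a split link of at least two components has $\Delta_{L_0^{(n)}}(t)=0$. Equivalently one reads off the same conclusion from a Seifert matrix $V$ computed from the symmetric diagram: the reflection pairs the generators coming from $D$ with those coming from $D^\ast$, and the triviality of the bridges makes $V-tV^{T}$ singular, so that $\det(V-tV^{T})\equiv 0$. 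This is the multi-tangle version of Lamm's Theorem 2.3 \cite{L2000}, and I would keep the bookkeeping parallel to that proof.

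The delicate point is the vanishing step rather than the component count. The components of $L_0^{(n)}$ need not be unknotted or even mutually unlinked: if one of the arcs $\tilde{x}_i$ carries part of the knotting of $\hat{K}$, the corresponding component $a\cup\sigma(a)$ can be a nontrivial knot, such as a connected sum of a knot with its mirror, so $\Delta_{L_0^{(n)}}(t)=0$ cannot come from linking numbers alone; the Whitehead link shows that vanishing linking numbers do not suffice for a nonsplit link. What must genuinely be checked is that, because it is the $0$-slope tangle and not the even-type tangle that is inserted, the doubled arcs span \emph{disjointly}, i.e.\ that the symmetric spanning surfaces for the various components can be kept apart. Pinning down this disjointness, with orientations chosen consistently across all the bridges, is the main obstacle, and is exactly the place where I would lean on the computation of \cite{L2000}.
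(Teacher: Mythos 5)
Your first paragraph is correct: after the substitution every junction between the two halves is a crossingless bridge, $L_0^{(n)}$ is the double $U\cup_P\sigma(U)$ of the $(n+1)$-strand tangle $U$ obtained by cutting $\hat K$ along $\tilde x_0,\dots,\tilde x_n$, it has $n+1\ge 2$ components of the form $a\cup\sigma(a)$, and the symmetry does force all linking numbers to vanish. The gap is that the step which actually produces $\Delta=0$ is asserted in two forms, neither of which is justified, and one of which cannot work as described. The proposed Seifert surfaces cannot be ``built in neighbourhoods of the disjoint arcs'': a component $a\cup\sigma(a)$ generates $H_1$ of any tubular neighbourhood of itself, so it bounds no surface there; the surfaces must run through the rest of the diagram, and their disjointness from the other components is then exactly the whole problem, not a detail. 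The stronger claim that $L_0^{(n)}$ is split fails in general. What the reflection symmetry really gives is concordance-theoretic: spinning each arc $a_j$ through the four-ball about the fixed plane $P$ yields \emph{disjoint slice (indeed ribbon) disks} for the components, which kills linking numbers and all Milnor invariants --- but ribbon links need not be split. Concretely, choose $\hat K$ and $\tilde x_0,\tilde x_1$ so that the two arcs of the cut-open $\hat K$ clasp in a Whitehead pattern (take $\hat K$ to be a band sum of the two components of the Whitehead link $W$, with the cuts at the band sites); then $L_0^{(1)}$ is the sum of $W$ and its mirror image along both components, and an innermost-circle argument on the punctured symmetry sphere $P$ (using irreducibility of the two tangle complements and the absence of a splitting disk for a clasped tangle) shows this link is not split, even though all its abelian invariants vanish. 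Your cancellation heuristic --- pushing $\sigma(U)$ back across $P$ --- is precisely what the clasp obstructs: the mirrored clasps are not adjacent along the components, so they cannot be cancelled in pairs.

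The fallback you offer does not close the gap either. Saying that the symmetry ``makes $V-tV^T$ singular'' is a restatement of the desired conclusion, not a proof; and the source you lean on does not contain the missing geometric step. Lamm's Theorem 2.3 in \cite{L2000}, like the present paper's Lemma, is proved by a matrix computation: the region/crossing (Dehn-type) Alexander matrix of the symmetric diagram is put into the block form with rows $(N\,|\,M\,|\,O)$ and $(-N\,|\,O\,|\,-M)$, and after adding the first $m+n$ rows to the last ones and the last $m$ columns to the middle ones, the last $m+n$ rows are supported in only $m$ columns, so the determinant vanishes identically. Nothing in that argument exhibits a splitting sphere or disjoint Seifert surfaces, so the ``main obstacle'' you flag is overcome neither by you nor by the reference. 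If you want a genuinely geometric proof in the spirit of your sketch, the salvageable route is four-dimensional: the spinning disks show $L_0^{(n)}$ is a ribbon, hence slice, link with $\mu=n+1\ge 2$ components, and a slice link with $\mu\ge 2$ has identically vanishing one-variable Alexander polynomial (a metabolizer of rank $g+\mu-1$ for the Seifert form of a connected genus-$g$ Seifert surface forces $\det(V-tV^T)\equiv 0$). That argument is correct and genuinely different from the paper's, but it is not the argument you wrote, and it proves sliceness, not splitness.
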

\begin{proof}
The proof of the lemma is almost analogous to the proof of Theorem 2.3 of \cite{L2000}.
In this proof, $L$ denotes the link $L_0^{(n)}$.
In order to prove the lemma, we will show that $\Delta_{L}(t)=0$ by using the Alexander matrix of $L$.

Taking an orientation of $\hat{K}$, we assign the orientation of the diagram $D$ induced from $\hat{K}$, and assign the reversed orientation for $D^\ast$ from the orientation of $D$.
We can then assign the orientation to $K$, also to $L$.
As shown in Figure \ref{labelsoflemma}, we label the regions divided by $L$ in the diagram and the crossings of $L$ as follows.
The region $a_1$ is the external region,
$a_0$ is the region surrounded by the arcs that make a connected sum of $D$ and $D^\ast$.
For $i=1,2,\dots,n$, $\bar{b}_i$ is the region formed by replacing the tangle $T_i$ with a tangle $T(0/1)$,
and $b_i$ is the region below $\bar{b}_i$.
Let $Q$ and $Q^\ast$ be square domains corresponding to the diagrams $D$ and $D^\ast$.
For $i=1,2,\dots,m$, $c_i$ is a region divided by $L$ in $Q$ and $c_i^\ast$ is the corresponding region in $Q^\ast$.
For $i=1,2,\dots,\ell$, $d_i$ is a crossing point in $Q$ and $d_i^\ast$ is the corresponding crossing point in $Q^\ast$. \begin{figure}[h]
	\centerline{\includegraphics[keepaspectratio]{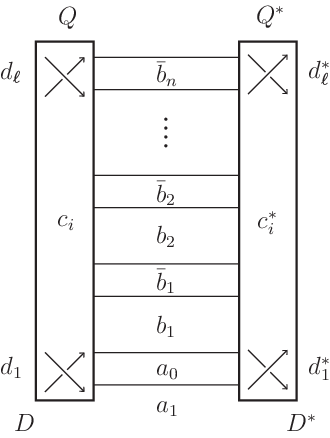}}
\caption{labels in the diagram of $L$}
\label{labelsoflemma}
\end{figure}

If there is a region that does not touch any of the crossings, then a trivial link component exists, and consequently $\Delta_L(t)=0$.
Therefore, we assume that all regions touch some crossings, which implies that each edge in the diagram of $L$ connects two crossing points, and each crossing point has four edges.

If the number of regions is $m$ and the number of crossings is $\ell$ in $Q$,
by regarding that the diagram of $L$ lies on the 2-sphere $S^2$,
we see that $\ell=m+n$ by calculating the Euler characteristic. 

To calculate the Alexander matrix of $L$,
four labels $x$, $-x$, $1$, and $-1$ are assigned to the four regions around a crossing point $d_i$ and $d_i^\ast$ as shown in Figure \ref{labelsoflemma02}.
Figure \ref{labelsoflemma02} also shows the correspondence between the labels of regions $c_i$ and $c_i^\ast$ around the crossing points $d_i$ and $d_i^\ast$, respectively.

\begin{figure}[h]
	\centerline{\includegraphics[keepaspectratio]{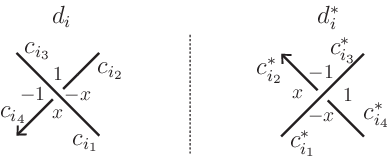}}
\caption{labels aroud crossing $d_i$ and $d_i^\ast$}
\label{labelsoflemma02}
\end{figure}

Note that the labels for the regions $(c_{i_1}, c_{i_2}, c_{i_3}, c_{i_4})=(x, -x, 1, -1)$ around the crossing point $d_i$ correspond to the $(-1)$-times of labels $(c_{i_1}^\ast, c_{i_2}^\ast, c_{i_3}^\ast, c_{i_4}^\ast)=(-x, x, -1, 1)$ around $d_i^\ast$.

The Alexander matrix $A(x)$ of $L$ consists of the elements $\{x, -x, 1, -1\}$, where the element in the $p$-th row and $q$-th column is an element labeled according to the rule stated above for the $p$-th crossing point and the $q$-th region.
Let $\hat{A}(x)$ be a matrix from which two columns corresponding to two adjacent regions have been removed.
Then we obtain the Alexander polynomial of $L$ by $\Delta_L(x)=\det \hat{A}(x)$.

Following the labels assigned to the diagram of $L$, we derive the Alexander matrix $A(x)$ as follows:
\begin{equation*}
A(x)=
 \left(
\begin{array}{c|c|c|c}
 \begin{array}{cc} \ast & \ast \\ \vdots & \vdots \\ \ast & \ast \end{array} & N & M & O \\
 \hline
 \begin{array}{cc} \ast & \ast \\ \vdots & \vdots \\ \ast & \ast \end{array} & -N & O & -M
\end{array}
\right)
\end{equation*}

For the matrix $A(x)$ described above, the rows correspond to crossing points $d_1$, $\dots$, $d_\ell$, $d_1^\ast$, $\dots$, $d_\ell^\ast$,
and the columns correspond to regions $a_0$, $a_1$, $b_1$, $\bar{b}_1$, $\dots$, $b_n$, $\bar{b}_n$, $c_1$, $\dots$, $c_m$, $c_1^\ast$, $\dots$, $c_m^\ast$.
The elements marked $\ast$ are not related to the calculation of the Alexander polynomial.
The blocks $N$ and $-N$, as well as $M$ and $-M$ are obtained through the symmetry of $D$ and $D^\ast$ as noted before.
By removing the two columns corresponding to the regions $a_0$ and $a_1$ from $A(x)$, we obtain the square matrix $\hat{A}(x)$ with size $2(m+n)$.

We can modify $\hat{A}(x)$ to the simple matrix $\tilde{A}(x)$ by adding the first $\ell=m+n$ rows to the last $\ell$ rows, and then adding the last $m$ columns to the middle $m$ columns as follows.

\begin{equation*}
\hat{A}(x)=
\left(
\renewcommand{\arraystretch}{1.5}
\begin{array}{c|c|c}
N & M & O \\
\hline
-N & O & -M
\end{array}\right)
\rightarrow
\left(
\renewcommand{\arraystretch}{1.5}
\begin{array}{c|c|c}
N & M & O \\
\hline
O & M & -M
\end{array}\right)
\rightarrow
\left(
\renewcommand{\arraystretch}{1.5}
\begin{array}{c|c|c}
N & M & O \\
\hline
O & O & -M
\end{array}\right)
=\tilde{A}(x)
\end{equation*}

The last $m+n$ rows contain an $(m+n)\times m$ block $-M$ with non-zero elements.
The rank of the matrix formed by these last $m+n$ rows of $\tilde{A}(x)$ is at most $m$ since all entries in the first $2n+m$ columns are zero.
Therefore, the row vectors of these last $m+n$ rows, including the block $-M$, are linearly dependent,
and the determinant of $\tilde{A}(x)$ equals zero.

As a result, we can conclude that $\Delta_L(x)=\det \hat{A}(x)=0$.
 
\end{proof}

\begin{proof}[Proof of Theorem \ref{maintheorem}(1)]
We shall prove Theorem\ref{maintheorem}(1) by induction on the number $n$ of tangle regions.
Let $K_n$ be the knot with $n$ tangle regions described in Theorem \ref{maintheorem}.

In the case of $n=1$, the diagram of $K_1$ is regarded as the one shown in Figure \ref{pf1firstcase},
where $\tilde{T}_1$ is the tangle obtained by rotating $T_1$ by $\pi$.
Let $T_0$ be the tangle formed by connecting two diagrams $D$ and $D^\ast$.

\begin{figure}[h]
	\centerline{\includegraphics[keepaspectratio]{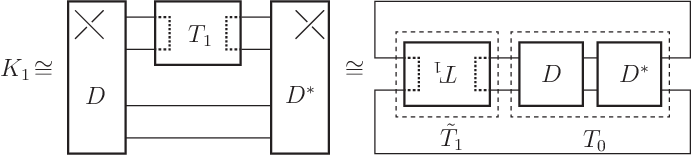}}
\caption{the diagram of $K_1$ in the case $n=1$}
\label{pf1firstcase}
\end{figure}

The tangle sum of two tangles $\tilde{T}_1$ and $T_0$ denotes $\tilde{T}_1+T_0$,
and we observe that $K_1=N(\tilde{T}_1+T_0)$ according to the notation.
$\nabla_K(z)$ denotes the Conway polynomial of $K$,
and $D(T)$ denotes the denominator of the tangle $T$
which is constructed by connecting the left-side endpoints of $T$ to each other and by also connecting the right-side endpoints.
Applying Conway's fraction formula (see, \cite[Theorem 7.9.1]{Cr2004}, \cite[Theorem 2.3]{Ka1981}) to $\nabla_{N(\tilde{T}_1+T_0)}$,
we obtain the following formula;
\begin{align*}
\nabla_{N(\tilde{T}_1+T_0)}(z)
=\nabla_{N(\tilde{T}_1)}(z)\nabla_{D(T_0)}(z)+\nabla_{D(\tilde{T}_1)}(z)\nabla_{N(T_0)}(z).
\end{align*}

$N(T_0)$ is equivalent to the $L_0^{(1)}$ that appears in Lemma \ref{numeratorofT0} for the case $n=1$.
Then $\nabla_{N(T_0)}(z)=0$ because $\Delta_{N(T_0)}(t)=0$.
Furthermore, 
since $D(T_0)$ is equivalent to the connected sum of two diagrams $D$ and $D^\ast$,
we see that $\nabla_{D(T_0)}(z)=\left(\nabla_{\hat{K}}(z)\right)^2$.
By the above argument and the fact that $N(\tilde{T}_1)$ is equivalent to $N(T_1)$, we can see the following;
\begin{align*}
\nabla_{K_1}(z)=\nabla_{N(\tilde{T}_1+T_0)}(z)=\nabla_{N(T_1)}(z)\left(\nabla_{\hat{K}}(z)\right)^2,
\end{align*}
and we conclude that $\Delta_{K_1}(t)=\Delta_{N(T_1)}(t)\left(\Delta_{\hat{K}}(t)\right)^2$.
This completes the proof for the case of $n=1$.

We next assume that $K_{n-1}$ satisfies $\Delta_{K_{n-1}}(t)=\Delta_{N(T_1)}(t)\cdots\Delta_{N(T_{n-1})}(t)\left(\Delta_{\hat{K}}(t)\right)^2$,
which is equivalent to $\nabla_{K_{n-1}}(z)=\nabla_{N(T_1)}(z)\cdots\nabla_{N(T_{n-1})}(z)\left(\nabla_{\hat{K}}(z)\right)^2$.
We will prove that $\nabla_{K_n}(z)=\nabla_{N(T_n)}(z)\nabla_{K_{n-1}}(z)$.
The proof described below is an analog of Conway's fraction formula proof.

We construct the skein tree that relates to all the crossings in the tangle $T_n$.
If we replace $T_n$ with $T(1/0)$, we obtain $K_{n-1}$.
Let $\bar{K}_{n-1}$ be the knot formed by replacing $T_n$ with $T(0/1)$.
All branches of the skein tree, except those ending at $K_{n-1}$ and $\bar{K}_{n-1}$, contain trivial components.
Therefore, these branches can be ignored when calculating the Conway polynomial.
Let $f(z)$ be the polynomial formed by the products of $\pm z$ or $1$ along the paths from the root of the tree to the endpoints with $K_{n-1}$,
and let $g(z)$ be the polynomial derived from the paths to the endpoints with $\bar{K}_{n-1}$. 
Using the polynomials $f(z)$ and $g(z)$,
we can conclude that
$$\nabla_K(z)=f(z)\nabla_{K_{n-1}}(z)+g(z)\nabla_{\bar{K}_{n-1}}(z).$$
Following the procedure used in constructing the skein tree for $K_n$,
we also build the skein tree related to all the crossings in the numerator $N(T_n)$ of $T_n$.
The endpoints of the tree that do not contain a trivial link component in their branches are $N(T(1/0))$ and $N(T(0/1))$.
Since $\nabla_{N(T(1/0))}(z)=1$ and $\nabla_{N(T(0/1))}(z)=0$ we can conclude that
$$\nabla_{N(T_n)}(z)=f(z)\nabla_{N(T(1/0))}(z)+g(z)\nabla_{N(T(0/1))}(z)=f(z).$$

We analyze the skein tree for $\bar{K}_{n-1}$, which relates to all the crossings in the $T_{n-1}$.
As in the previous case, the endpoints of branches without trivial link components include the tangles $T(1/0)$ and $T(0/1)$.
These types are similar to $\bar{K}_{n-1}$, meaning they contain more than two tangles of type $T(0/1)$.
Repeating this procedure, we see that all branches of the skein tree for $\bar{K}_{n-1}$, which relate to all the crossings in the tangle regions $T_{n-1}, \dots T_1$, have endpoints with the type of  $L_0^{(k)}$ for some $k$ in Lemma \ref{numeratorofT0}.
Therefore we conclude $\nabla_{\bar{K}_{n-1}}(z)=0$ since $\nabla_{L_0^{(k)}}(z)=0$ for any $k$.

As a result, we observe that $\nabla_{K}(z)=f(z)\nabla_{K_{n-1}}(z)=\nabla_{N(T_n)}(z)\nabla_{K_{n-1}}(z)$.
By the assumption of the induction, we finally conclude
$$\nabla_{K}(z)=\nabla_{N(T_n)}(z)\nabla_{N(T_{n-1})}(z)\cdots\nabla_{N(T_1)}(z)\left(\nabla_{\hat{K}}(z)\right)^2.$$

\end{proof}

\subsection{Proof of Theorem \ref{maintheorem} (2)}

The proof of Theorem \ref{maintheorem} (2) extends the proof of \cite[Theorem 1.1 (2)]{KN2025},
and it is also analogous to the proof of \cite[Theorem 2.3]{L2000} as noted in \cite{KN2025}.
Although the method used to prove Theorem \ref{maintheorem} (2) is essentially the same as in Theorem 1.1 (2) of \cite{KN2025}, we will present the proof in a self-contained manner in this paper.
To explain the proof explicitly in the case, we will assign labels to the arcs in the diagram of $K$ as follows.

We assign the orientation of $K$ by fixing an orientation of the partial knot $\hat{K}$ as mentioned at the beginning of the proof of Lemma \ref{numeratorofT0}.
Note that although all the orientation of $\tilde{x}_0, \tilde{x}_1, \dots, \tilde{x}_n$ are indicated downward in Figure \ref{labelsofpf2},
it does not affect the following proof if some arcs $\tilde{x}_i$ go upward.

Let $x_1, x_2, \dots, x_m$ be arcs in $D$, and $x_1^\ast, x_2^\ast, \dots, x_m^\ast$ be the corresponding arcs in $D^\ast$.
Note that the number $m$ is unrelated to the number $m$ in the proof of Lemma \ref{numeratorofT0}.
The arcs $\tilde{x}_0$ and $\tilde{x}_0^\ast$ are used to create a connected sum between $D$ and $D^\ast$,
and also $\tilde{x}_i$ and $\tilde{x}_i^\ast$ are used to connect the tangle $T_i$ to $D$ and $D^\ast$,
where $\tilde{x}_i \in \{x_1, \dots, x_m\}$, $\tilde{x}_i^\ast \in \{x_1^\ast, \dots, x_m^\ast\}$.
After performing the connected sum, the part of the arc $\tilde{x}_0$ which is oriented outward from $D$ is transformed into the arc $\alpha_0$, and the part of $\tilde{x}_0$ which is oriented inward to $D$ is transformed into the arc $\alpha_1$.
The arc $\tilde{x}_0^\ast$ is similarly divided into $\alpha_0$ and $\alpha_1$.

\begin{figure}[h]
	\centerline{\includegraphics[keepaspectratio]{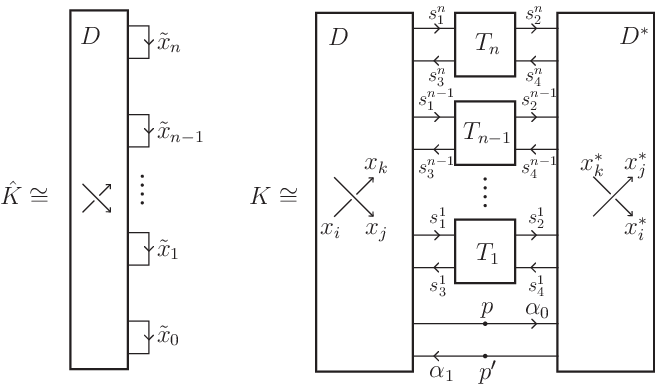}}
\caption{labels in the diagram of $K$}
\label{labelsofpf2}
\end{figure}

Let $s_1^i, s_2^i, \dots, s_{n_i}^i$ be the arcs in the tangle $T_i$,
where $n_i$ is the number of arcs in $T_i$.
Especially let $s_1^i$, $s_2^i$, $s_3^i$ and $s_4^i$ be the arcs connecting the tangle $T_i$ to $D$ and $D^\ast$.
The arc $\tilde{x}_i$ is divided and transformed into the arcs $s_1^i$ and $s_3^i$,
and $\tilde{x}_i^\ast$ is divided and transformed into the arcs $s_2^i$ and $s_4^i$,
similar to the case of $\tilde{x}_0$ and $\tilde{x}_0^\ast$.

This assigns labels to all the arcs in the diagram of $K$.
These arcs are regarded as generators of the knot group $G(K)$ in a Wirtinger presentation, using the orientation assigned above.

We will explain the process of finding the word $\gamma$ of the longitude $\lambda_K$ of $K$ in $G(K)$.

Let $p$ and $p'$ be points on the arcs $\alpha_0$ and $\alpha_1$ respectively, as shown in Figure \ref{labelsofpf2}.
Starting from the point $p$ and moving along the path following the orientation of $K$,
we read the word $w^\ast=y^\ast {(x^\ast)}^{-1}$ or $w^\ast={(y^\ast)}^{-1}x^\ast$ when we passing through the under-crossing point of $D^\ast$, similarly the word $s_j^i {(s_k^i)}^{-1}$ or ${(s_j^i)}^{-1} s_k^i$ in the tangle $T_i$,
depending on whether the crossing is positive or negative as shown in Figure \ref{labelsofpf2long}. 
Let $\gamma_+$ be the word obtained by repeating this process until reaching the point $p'$.
We also obtain the word $\gamma_-$ similarly, starting from $p$ and moving backward along the path in $D$ and all $T_i$, reading words at each under-crossing point in reverse order until reaching point $p'$. 

\begin{figure}[h]
	\centerline{\includegraphics[keepaspectratio]{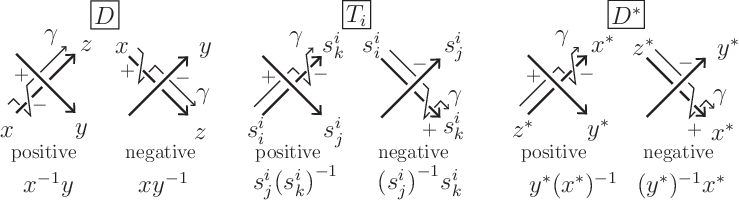}}
\caption{words for $\gamma_+$ and $\gamma_-$ on each under-crossing}
\label{labelsofpf2long}
\end{figure}

Let $\gamma$ be a concatenation of two words $\gamma_-$ and $\gamma_+$, such that $\gamma=\gamma_-\cdot\gamma_+$.
According to the previous construction, a loop represented by $\gamma$ is parallel to the knot $K$, and the linking number between the loop and $K$ is equal to zero.
Therefore, $\gamma$ represents the longitude $\lambda_K$ of $K$.

We will precisely describe the structure of sub-words in $\gamma$ as follows.
On both sides of point $p$, at the first encountered under-crossing points $p_1$ and $p_1^\ast$,
the added words $w_1$ and $w_1^\ast$ included in the diagrams $D$ and $D^\ast$ respectively are of the following types:
when the under-crossing point $p_1$ is positive, $(w_1, w_1^\ast)=(x_{i_1}^{-1}y_{i_1}, (y_{i_1}^\ast)^{-1} x_{i_1}^\ast)$,
or 
when $p_1$ is negative, $(w_1, w_1^\ast)=(x_{i_1}y_{i_1}^{-1}, y_{i_1}^\ast(x_{i_1}^\ast)^{-1})$
as shown in Figure \ref{wordofpf2long_01}.

\begin{figure}[h]
	\centerline{\includegraphics[keepaspectratio]{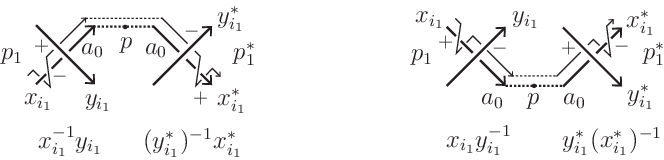}}
\caption{$(p_1, p_1^\ast)=(\text{positive},\text{negative})$ or $(\text{negative},\text{positive})$}
\label{wordofpf2long_01}
\end{figure}

At the under-crossing points $p_k$ and $p_k^\ast$, which are corresponding points in $D$ and $D^\ast$ respectively, the added words are similarly of the following types:
$(w_k, w_k^\ast)=(x_{i_k}^{-1}y_{i_k}, (y_{i_k}^\ast)^{-1} x_{k_1}^\ast)$ if $p_k$ is positive,
or 
$(w_k, w_k^\ast)=(x_{i_k}y_{i_k}^{-1}, y_{i_k}^\ast(x_{i_k}^\ast)^{-1})$ if $p_k$ is negative
as shown in Figure \ref{wordofpf2long_02}.
At the under-crossing points within the tangle $T_i$, during the reading process, the added word is either $s_j^i(s_k^i)^{-1}$ or $(s_j^i)^{-1}s_k^i$ as illustrated in the middle of Figure \ref{labelsofpf2long}.

\begin{figure}[h]
	\centerline{\includegraphics[keepaspectratio]{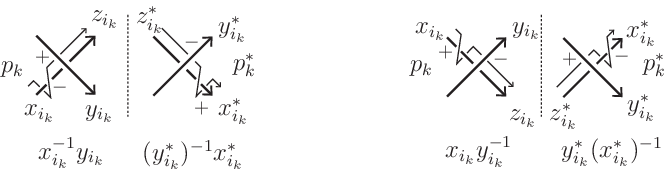}}
\caption{$(p_k, p_k^\ast)=(\text{positive},\text{negative})$ or $(\text{negative},\text{positive})$}
\label{wordofpf2long_02}
\end{figure}

As a result of the reading process,
two words, $\gamma_+$ and $\gamma_-$, take the following form;
\begin{align*}
\gamma_+
&= w_1^\ast w_2^\ast \cdots w_{i_1}^\ast S_+^{k_1} w_{i_1+1}^\ast \cdots w_{i_j}^\ast S_+^{k_j} w_{i_j+1}^\ast \cdots w_{i_n}^\ast S_+^{k_n} w_{i_n+1} \cdots w_m^\ast \\
\gamma_-
&= w_m \cdots w_{i_n+1} S_-^{k_n} w_{i_n} \cdots w_{i_j+1} S_-^{k_j} w_{i_j} \cdots w_{i_1+1} S_-^{k_1} w_{i_1} \cdots w_2 w_1
\end{align*}

where $S_+^{k_i}$ and $S_-^{k_i}$ are words created by multiplying words $s_j^i (s_k^i)^{-1}$ or $(s_j^i)^{-1}s_k^i$, which correspond to the under-crossing points in $T_i$.  
 
We next consider the knot groups $G(\hat{K})$ and $G(K)$.
The knot group $G(\hat{K})$ is generated by the arcs $\hat{x}_1, \dots, \hat{x}_m$, which correspond to the arcs $x_1, \dots, x_m$ in the diagram $D$.
The knot group $G(K)$ is also generated by the arcs $x_1, \dots, x_m, x_1^\ast, \dots, x_m^\ast$ in the diagrams $D$ and $D^\ast$, $\alpha_0$, $\alpha_1$, and $s_k^i$ in the tangles $T_i$.
We regard these generators as those of the Wirtinger presentation,
and let $r_i$ be a relator corresponding to a crossing in $D$, $r_i^\ast$ to a crossing in $D^\ast$, and $r_k^i$ to a crossing in $T_i$.
Note that the generators corresponding to the arcs $\tilde{x}_0$ and $\tilde{x}_0^\ast$ in the diagram $D$ and $D^\ast$ are devided and transformed into generators corresponding to the arcs $\alpha_0$ and $\alpha_1$,
similarly $\tilde{x}_i$ and $\tilde{x}_i^\ast$ have also been transformed into $s_k^i$, $k=1,2,3,4$.

Using the Wirtinger presentation above, we define a map $\varphi:G(K)\to G(\hat{K})$ as follows.

\begin{equation*}
\varphi:G(K)\to G(\hat{K}):\;
\left\{\begin{array}{l}
 x_i \mapsto \hat{x}_i,\; x_i^\ast \mapsto \hat{x}_i, \;\; i=1,2,\dots, m \\[5pt]
 \alpha_0 \mapsto \tilde{x}_0, \alpha_1 \mapsto \tilde{x}_0, \\[5pt]
 s_k^i \mapsto \tilde{x}_i, \;\; i=1,2,\dots, n, \;\; k=1,2,\dots,n_i
\end{array}\right.
\end{equation*}

The relator $\hat{r}_i$ in $G(\hat{K})$ is of the form $\hat{r}_i=\hat{x}_i\hat{x}_j\hat{x}_k^{-1}\hat{x}_j^{-1}$ or $\hat{x}_j\hat{x}_i\hat{x}_j^{-1}\hat{x}_k^{-1}$ depending on whether the crossing point corresponding to $\hat{r}_i$ is positive or negative.
The pair of relators $(r_i, r_i^\ast)$ in $D$ and $D^\ast$ are also of the form
$(r_i, r_i^\ast)=(x_i x_j x_k^{-1} x_j^{-1}, x_i^\ast x_j^\ast (x_k^\ast)^{-1} (x_j^\ast)^{-1})$ or $(x_j x_i x_j^{-1} x_k^{-1}, x_j^\ast x_i^\ast (x_j^\ast)^{-1} (x_k^\ast)^{-1})$ depending on whether the crossing point corresponding to $r_i$ is positive or negative as shown in Figure \ref{pfmainthm2_relator_01}.
The relators $r_k^i$ in $T_i$ are of the form $r_k^i=s_i^i s_j^i (s_k^i)^{-1} (s_j^i)^{-1}$ or $s_j^i s_i^i (s_j^i)^{-1} (s_k^i)^{-1}$.

\begin{figure}[h]
	\centerline{\includegraphics[keepaspectratio]{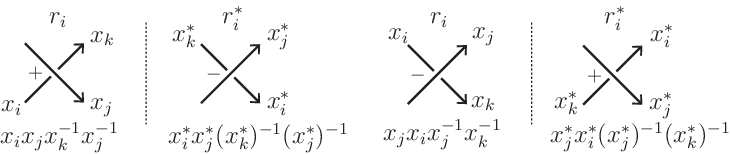}}
\caption{$(r_i, r_i^\ast)=(\text{positive},\text{negative})$ or $(\text{negative},\text{positive})$}
\label{pfmainthm2_relator_01}
\end{figure}

From the observation above, we see that the map $\varphi$ maps each relator to relators of $G(\hat{K})$ as follows.
\begin{align*}
\varphi(r_i)=\varphi(r_i^\ast)
&=\hat{x}_i \hat{x}_j \hat{x}_k^{-1} \hat{x}_j^{-1} \;\;\text{or}\;\; \hat{x}_j \hat{x}_i \hat{x}_j^{-1} \hat{x}_k^{-1} = \hat{r}_i \\
\varphi(r_k^i)
&=\tilde{x}_i\tilde{x}_i\tilde{x}_i^{-1}\tilde{x}_i^{-1}=1, \;\text{for}\; k=1,2,\dots, n_i
\end{align*}

Therefore, the map $\varphi$ is a homomorphism.
Furthermore, since all the generators of $G(\hat{K})$ are images of the generators of $G(K)$, the map $\varphi$ is surjective.

Based on the construction above,
the word $\gamma$ which represents the longitude $\lambda_K$ of $K$ has the following form:
\begin{align*}
\gamma &=\gamma_-\cdot\gamma_+ \\
&= w_m \cdots w_{i_n+1} S_-^{k_n} w_{i_n} \cdots w_{i_j+1} S_-^{k_j} w_{i_j} \cdots w_{i_1+1} S_-^{k_1} w_{i_1} \cdots w_2 w_1 \\
& \phantom{= \ell_-} \cdot w_1^\ast w_2^\ast \cdots w_{i_1}^\ast S_+^{k_1} w_{i_1+1}^\ast \cdots w_{i_j}^\ast S_+^{k_j} w_{i_j+1}^\ast \cdots w_{i_n}^\ast S_+^{k_n} w_{i_n+1} \cdots w_m^\ast
\end{align*}

The sub-word $w_1 w_1^\ast$ located at the center of $\gamma$ has the form
$$w_1w_1^\ast=
x_{i_1}^{-1}y_{i_1}(y_{i_1}^\ast)^{-1}x_{i_1}^\ast \;\;\text{or}\;\; x_{i_1}y_{i_1}^{-1}y_{i_1}^\ast (x_{i_1}^\ast)^{-1}
$$
then $\varphi(w_1w_1^\ast)=1$.
Similarly, we obtain
$\varphi(w_iw_i^\ast)=1$ successively for each index $i$,
and $\varphi(S_-^{k_i})=\varphi(S^{k_i}_+)=(\text{products of}\;(\tilde{x}_i\tilde{x}_i^{-1})\;\text{or}\;(\tilde{x}_i^{-1}\tilde{x}_i))=1$ for all index $i$.

By the above arguments, we see that $\varphi(\lambda_K)=\varphi(\gamma)=1$,
then the proof of Theorem \ref{maintheorem}(2) is completed.

\section{Examples}
We will present several examples based on the main theorem and examine their properties.

Kinoshita and Terasaka introduce an infinite family of knots with an Alexander polynomial equal to $1$ \cite{KT1957}.
Let $T_n^{KT}$ be a tangle with two twist regions that are used for the partial knots and also for the inserted parts, having $n$ and $n-1$ twists, as shown in the middle of Figure \ref{example1_01} for $n\geq 3$.

\begin{figure}[h]
	\centerline{\includegraphics[keepaspectratio]{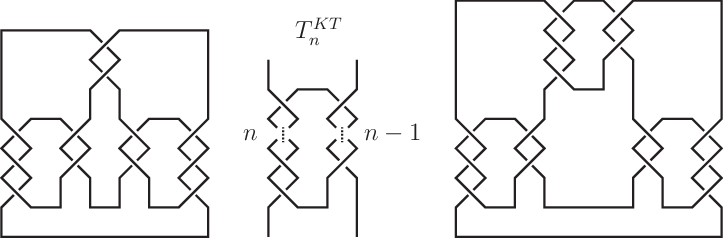}}
\caption{Kinoshita-Terasaka knot, tangle $T_n^{KT}$, and $K_A$}
\label{example1_01}
\end{figure}

Knots in this infinite family of knots are regarded as symmetric unions of the partial knot $N(T_n^{KT})$ with an even twist region.
Note that the numerator $N(T_n^{KT})$ is a trivial knot.
The knot on the left of Figure \ref{example1_01} is the well-known ``Kinoshita-Terasaka knot'', which corresponds to the case $n=3$ and is listed as 11n42 in the Hoste-Thistlethwaite table.
From the perspective of the main theorem, this knot is regarded as in the case where the partial knot $\hat{K}$ is $N(T_3^{KT})$ and $T_1$ is the region with a full twist.

Replacing the twist region of the Kinoshita-Terasaka knot with the tangle $T_3^{KT}$, we obtain the knot $K_A$ on the right of Figure \ref{example1_01}.
We observe that the knot $K_A$ is equivalent to 13n3934, its Alexander polynomial $\Delta_{K_A}(t)$ is trivial, and its Jones polynomial is equal to $V_{K_A}(t)=t^{-5}-3t^{-4}+5t^{-3}-6t^{-2}+5t^{-1}-2+3t^2-5t^3+6t^4-5q^5+3t^6-t^7$
by using SnapPy \cite{SnapPy}.

The knot $K_A$ has a single tangle region, so this type of construction has already been discussed in \cite{KN2025}.
By adding the tangle $T_3^{KT}$ at the position of the twice twisted part of $T_3^{KT}$ and $(T_3^{KT})^\ast$ in $K_A$, we obtain a knot $K_B$ on the left of Figure \ref{example1_02}.
We can also verify with SnapPy that the Alexander polynomial is trivial $\Delta_{K_B}(t)=1$, and the Jones polynomial is
$V_{K_B}=t^{-6} - 3t^{-5} + 5t^{-4} - 5t^{-3} + t^{-2} + 5t^{-1} - 10 + 14t - 13t^2 + 9t^3 - 2t^4 - 5t^5 + 10t^6 - 11t^7 + 8t^8 - 4t^9 + t^{10}$.

By considering $N(T_4^{KT})$ as a partial knot and attaching three tangles $T_3^{KT}$,
we obtain a knot $K_C$, as shown to the right of Figure \ref{example1_02}.
Using SnapPy for the calculation, the Alexander polynomial of $K_C$ is again trivial $\Delta_{K_C}(t)=1$, and the Jones polynomial is $V_{K_C}(t)=t^{-9} - 4t^{-8} + 10t^{-7} - 16t^{-6} + 15t^{-5} - 31t^{-3} + 66t^{-2} - 87t^{-1} + 77 - 29t - 40t^2 + 108t^3 - 148t^4 + 146t^5 - 102t^6 + 33t^7 + 34t^8 - 77t^9 + 86t^{10} - 69t^{11} + 42t^{12} - 19t^{13} + 6t^{14} - t^{15}$.

As shown in these examples, we can easily create knots that have a trivial Alexander polynomial and a complicated Jones polynomial using our construction.

\begin{figure}[h]
	\centerline{\includegraphics[keepaspectratio]{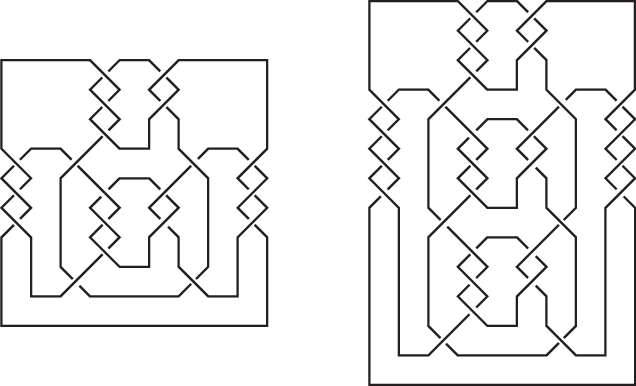}}
\caption{$K_B$ and $K_C$}
\label{example1_02}
\end{figure}

Kitano and Suzuki provided a complete list of knot pairs where $K_1\geq K_2$ up to 10 crossings \cite{KS2005}, and these maps that realize epimorphisms for these pairs are explicitly described in \cite{KS2008}.
Based on their list, we examined all pairs $K_1 \geq K_2$ where the epimorphisms map the longitude of $G(K_1)$ to the trivial element of  $G(K_2)$ when $K_2$ equals $3_1$ or $4_1$, as in \cite{KN2025}.
Except for the two knots $10_{82}$ and $10_{87}$, all of these knots can be constructed using our construction in the case of a single tangle region.

Horie, Kitano, Matsumoto, and Suzuki provided a complete list of pairs $K_1 \geq K_2$ for cases where $K_1$ has 11 crossings and $K_2=3_1, 4_1, 5_1, 5_2, 6_1, 6_2, 6_3$ \cite{HKMS2011}.
The three knots 11a157, 11a264, and 11a305 are the only knots on the list for the case $K_2=3_1$ where the Alexander polynomial is divisible by the square of the Alexander polynomial of the trefoil and their quotients are products of two polynomials.
The knot 11a157 can be constructed using our construction for the case of two tangle regions, as shown on the left of Figure \ref{example2_01}, which is denoted by $K_D$.
The knot $K_D'$ on the right of Figure \ref{example2_01} is formed by taking a single tangle region as the connected sum of the trefoil and the figure-eight knot.
However, the knot $K_D'$ is also recognized as 11a157 by SnapPy.
Although the Alexander polynomials of the other knots 11a264 and 11a305 are the same as those of 11a157,
we have not yet succeeded in constructing these knots as knots built using our construction. 

\begin{figure}[h]
	\centerline{\includegraphics[keepaspectratio]{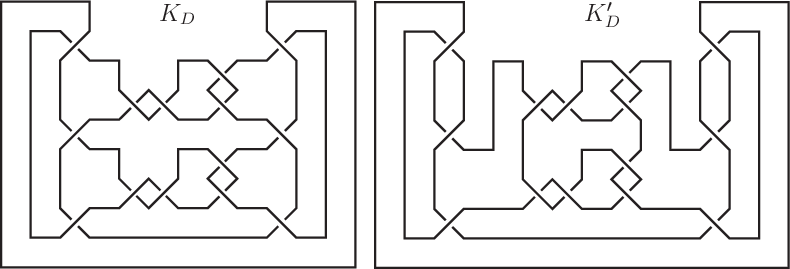}}
\caption{11a157: $K_D$ and $K_D'$}
\label{example2_01}
\end{figure}

Next, we examine the knots in the list that have an epimorphism to the knot group of the figure-eight knot.
The knots in the list whose Alexander polynomials are divisible by the square of the Alexander polynomial of the figure-eight knot are 11a5 and 11a297.
The Alexander polynomials of these knots are as follows.
\begin{align*}
\Delta_{11a5}(t)&=(t^2-3t+1)^3=\Delta_{4_1}(t)\cdot\left(\Delta_{4_1}(t)\right)^2 \\
\Delta_{11a297}(t)&=(2t^2-3t+2)(t^2-3t+1)^2=\Delta_{5_2}(t)\cdot\left(\Delta_{4_1}(t)\right)^2
\end{align*}
The knot 11a5 is constructed as a knot with a single tangle region using our construction as shown in Figure \ref{example2_02}, which is denoted by $K_E$.
We also have not yet succeeded in constructing 11a297 as a knot built using our construction.

\begin{figure}[h]
	\centerline{\includegraphics[keepaspectratio]{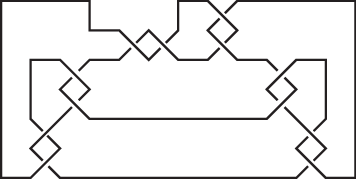}}
\caption{11a5: $K_E$}
\label{example2_02}
\end{figure}

Following the considerations above, we propose the following problems.

\begin{prob}
Determine whether the knots 11a264 and 11a305 can be constructed using the method of Theorem \ref{maintheorem} in the case that the trefoil is a partial knot.
Also, determine this for 11a297 in the case that the figure-eight knot is a partial knot.
\end{prob}

\vspace{8pt}

\noindent\textbf{Acknowledgements:}
The authors would like to express their gratitude to Christoph Lamm for valuable comments on their previous paper,
for inspiring them to generalize their theorem to the case with multiple tangles,
and for his helpful comments on the draft of this paper.
The first author is partially supported by JSPS KAKENHI Grant Number 19K03505.
The second author is partially supported by JSPS KAKENHI Grant Number 19K03460.

\end{document}